\documentclass[reqno]{amsart}
\usepackage{amsmath,amssymb}
\usepackage{hyperref}

%%%%%%%%%

\newcommand*{\mailto}[1]{\href{mailto:#1}{\nolinkurl{#1}}}

\allowdisplaybreaks 

%%%%%%%%%THEOREMS%%%%%%%%%%%%%%%%%%%%%%%%%%%%%%%%%%
\newtheorem{theorem}{Theorem}[section]
\newtheorem{proposition}[theorem]{Proposition}

\newtheorem{lemma}[theorem]{Lemma}

\theoremstyle{definition}

\newtheorem{remark}[theorem]{Remark}
\newtheorem{hypothesis}[theorem]{Hypothesis}

%%%%%%%%%%%%%%%%%%%%%%%%NUMBERING%%%%%%%%%%%%%%%%%%%%%%%%

\numberwithin{equation}{section}

%%%%%%%%%%%%%%%%%%%%%%%%%%%%%%%%%%%%%%%%%%%%%%%%%%%%%%%%%%%%%%%%%%%%%%%%

\begin{document}

\title[Lower bounds for self-adjoint Sturm--Liouville operators]{Lower bounds for self-adjoint Sturm--Liouville operators}

\author[J. Behrndt]{Jussi Behrndt}
\address{Technische Universit\"{a}t Graz\\
Institut f\"ur Angewandte Mathematik\\
Steyrergasse 30\\
8010 Graz, Austria}
\email{\mailto{behrndt@tugraz.at}}
\urladdr{\url{https://www.math.tugraz.at/~behrndt/}}

\author[F.\ Gesztesy]{Fritz Gesztesy}
\address{Department of Mathematics,
Baylor University, Sid Richardson Bldg., 1410 S.\,4th Street,
Waco, TX 76706, USA}
\email{\mailto{Fritz\_Gesztesy@baylor.edu}}
\urladdr{\url{http://www.baylor.edu/math/index.php?id=935340}}

\author[P. Schmitz]{Philipp Schmitz}
\address{Department of Mathematics\\
	Technische Universit\"at Ilmenau\\ Postfach 100565\\
	98648 Ilmenau\\ Germany}
\email{\mailto{philipp.schmitz@tu-ilmenau.de}}
\urladdr{\url{https://www.tu-ilmenau.de/obc/team/philipp-schmitz}}

\author[C. Trunk]{Carsten Trunk}
\address{Department of Mathematics\\
Technische Universit\"at Ilmenau\\ Postfach 100565\\
98648 Ilmenau\\ Germany}
\email{\mailto{carsten.trunk@tu-ilmenau.de}}
\urladdr{\url{https://www.tu-ilmenau.de/funktionalanalysis}}

\keywords{}

\begin{abstract}
In this note we provide estimates for the lower bound of the self-adjoint operator associated with the three-coefficient Sturm--Liouville differential expression
$$
\frac{1}{r} \left(-\frac{\mathrm d}{\mathrm dx} p \frac{\mathrm d}{\mathrm dx} + q\right)
$$
in the weighted $L^2$-Hilbert space $L^2(\mathbb R; rdx)$.
\end{abstract}

\maketitle

\section{Introduction}

One-dimensional Schr\"{o}dinger operators of the form
\begin{equation}\label{aaa}
H=-\frac{\mathrm d^2}{\mathrm dx^2}+ q
\end{equation}
a with real-valued potential $q$
have been studied in the mathematical and physical literature intensively in the last century due to their particular importance in quantum mechanics. Typically one is interested in a suitable self-adjoint 
realization in $L^2(\mathbb R)$ and its spectral properties, among them 
estimates for lower bounds, numbers of negative eigenvalues, and Lieb--Thirring inequalities are particularly
important, see, for instance, the recent survey \cite{Fr21}. 

The main objective of this note is to derive estimates on the lower bound of more general Sturm-Liouville operators of the type
\begin{equation}
	T = \frac{1}{r} \left(-\frac{\mathrm d}{\mathrm dx} p \frac{\mathrm d}{\mathrm dx} + q\right)
\end{equation} 
with real-valued coefficients under the standard assumptions $r,1/p,q\in L^1_{\rm loc}(\mathbb R)$ and $r,p$ positive almost everywhere.
We refer the reader to the textbooks \cite{DS88}, \cite{GNZ23}, \cite{JR76}, \cite{Pe88}, \cite{Sc81}, \cite{Ti62}, \cite{We03}, and \cite{Ze05} for an overview and detailed study of Sturm-Liouville (resp., Schr\"odinger) operators. 
The natural Hilbert space in this context is the weighted $L^2$-space  $L^2_r(\mathbb R):=L^2(\mathbb R; rdx)$ and 
under some mild additional assumptions on the coefficients one concludes that $T$
is a semibounded self-adjoint operator in $L^2_r(\mathbb R)$.
As mentioned above, lower bounds for the spectrum of $T$ 
are known for the special case $r=p=1$, that is, $T=H$, and for completeness we provide a straightforward estimate as
a warm up in Section~\ref{warm}. 

In the general setting
it seems that a systematic study is missing and it is the aim of this note to initiate and contribute to this circle of problems. It is clear that 
the coefficients $r$ and $p$ have an essential influence on the lower bound. If, for instance, the weight function $r=r_0$ is constant and $p=1$ then formally
$T=(1/r_0) H$ and the lower bound $\min\sigma(T)$ of $T$ is simply given by $(1/r_0) \min\sigma(H)$. This already indicates that for a nonconstant weight function $r$ the $L^\infty$-norm of 
$1/r$ will appear in the lower bounds, and the situation becomes much more difficult if $1/r\not \in L^\infty(\mathbb R)$, in which case we require the existence of a function $g$ that
neutralizes the behaviour of the weight function $r$ on subsets of $\mathbb R$ where $r$ is small.
Furthermore, the norm of the 
coefficient $p$ will enter in lower bound estimates and very roughly speaking $1/p$ has to be considered in conjunction with the potential $q$. The methods and proofs in this paper are strongly inspired by 
\cite{BST19}, where bounds on nonreal eigenvalues of indefinite Sturm-Liouville operators are obtained.

\subsection*{Acknowledgements.} 
Jussi Behrndt gratefully acknowledges financial support by the Austrian Science Fund (FWF): P 33568-N.
The authors wish to thank the Erwin Schr\"odinger International Institute for Mathematics and Physics (ESI) at 
University of Vienna, where this article was finalized during the workshop \textit{Spectral Theory of Differential Operators} in November 2022.
This publication is also based upon work from COST Action CA 18232
MAT-DYN-NET, supported by COST (European Cooperation in Science and
Technology), www.cost.eu.

\section{One-dimensional Schr\"{o}dinger operators}\label{warm}

As a warm up we discuss in this short section the special case $p=r=1$ and $q\in L^s(\mathbb R)$ real-valued~a.e., $s \in [1,\infty]$, and derive a lower bound for the self-adjoint Schr\"{o}dinger operator $H$ in \eqref{aaa} using the argument presented in \cite[(3.5.30), p.~155--156]{Th3}. 

We start by recalling that $q \in L^s(\mathbb R)$, $s \in [1,\infty)$, implies that $q$ is relatively form compact with respect to the free Hamiltonian $H_0$ in $L^2(\mathbb R)$, where
\begin{equation}
H_0 f = - f'', \quad f \in D(H_0) = H^2(\mathbb R),
\end{equation}
with $H^{\ell}(\mathbb R)$, $\ell \in [0,\infty)$, the standard scale of Sobolev spaces. This follows from the stronger statement that $|q|^{1/2} (H_0 + I)^{-1/2}$ satisfies (see, e.g., \cite[Theorem~XI.20]{RS79})
\begin{equation}
|q|^{1/2} (H_0 + I)^{-1/2} \in \mathcal{B}_{2s}\bigl(L^2(\mathbb R)\bigr) \, \text{ if } \, q \in L^s(\mathbb R), \; s \in [1,\infty),
\end{equation}
where $\mathcal{B}_t(\mathcal{H})$ represent the $\ell^t(\mathbb N)$-based 
trace ideals of compact operators in the complex, separable Hilbert space $\mathcal H$. In particular, 
\begin{equation}
|q|^{1/2} (H_0 + I)^{-1/2} \, \text{ is compact,}   \label{2.3} 
\end{equation} 
and hence the form sum $H$ of $H_0$ and $q$ is self-adjoint in $L^2(\mathbb R)$ and bounded from below. By a result of Hartman \cite{Ha48} and Rellich \cite{Re51} 
(see also \cite[Theorem~8.5.2]{GNZ23}), the boundedness from below of the minimal operator associated with the differential expression $- (d^2/dx^2) + q$ implies that the latter is in the limit point case at $\pm \infty$ and hence the maximal operator associated with $- (d^2/dx^2) + q$ is self-adjoint in $L^2(\mathbb R)$, and thus necessarily coincides with $H$. 
It is clear that for $s=\infty$ the same is true as $q \in L^\infty(\mathbb R)$ is a bounded perturbation of $H_0$.
Consequently, $H$ is given by
\begin{equation*}
\begin{split}
& H f = -f'' + qf,  \\
& f \in D(H) = \bigl\{g\in L^2(\mathbb R) \,\big|\, g, g' \in AC_{\rm loc}(\mathbb R); \, (- g'' + qg) \in L^2(\mathbb R) \bigr\}.
\end{split}
\end{equation*}

Property \eqref{2.3} then implies
\begin{equation} 
\sigma_{\rm ess}(H) = \sigma_{\rm ess}(H_0) = [0,\infty),  
\end{equation}
and hence it suffices to consider negative eigenvalues,
which turn out to be simple as $-(d^2/dx^2) +q$ is in the limit point case at $\pm \infty$. 
We consider an eigenvalue
$\lambda<0$ of $H$ and denote the corresponding eigenfunction 
by $f_\lambda$. From $-f_\lambda''+qf_\lambda=\lambda f_\lambda$ one concludes  
\begin{equation*}
 f_\lambda=-\left(-\frac{\mathrm d^2}{\mathrm dx^2}-\lambda\right)^{-1}qf_\lambda
\end{equation*}
and using the corresponding Green's function we obtain 
\begin{equation} \label{2.1}
 \Vert f_\lambda\Vert_2=\frac{1}{2\sqrt{-\lambda}}\bigl\Vert e^{-\sqrt{-\lambda}\vert \, \cdot \, \vert} \ast qf_\lambda 
 \bigr\Vert_2\leq \frac{1}{2\sqrt{-\lambda}} \bigl\Vert e^{-\sqrt{-\lambda}\vert \, \cdot \, \vert} \bigr\Vert_t
 \Vert qf_\lambda\Vert_{t'},
\end{equation}
where Young's inequality,\footnote{Explicitly, $\|f \ast g\|_{\alpha} \leq \|f\|_{\beta} \|g\|_{\gamma}$, $1 \leq \alpha, \beta,\gamma \leq \infty$, $1 + \alpha^{-1} = \beta^{-1} + \gamma^{-1}$.} with $1/t+1/t'=1+1/2$ was applied in the last step. H\"older's inequality then yields 
$\Vert qf_\lambda\Vert_{t'}\leq \Vert q\Vert_s \Vert f_\lambda\Vert_2$ for $1/t'=1/s+1/2$ and hence,  
\begin{equation} \label{2.2}
 \sqrt{-\lambda}\leq \frac{1}{2} \bigl\Vert e^{-\sqrt{-\lambda}\vert \, \cdot \, \vert} \bigr\Vert_t
 \Vert q\Vert_s=\frac{1}{2}\left(\frac{2}{t\sqrt{-\lambda}}\right)^\frac{1}{t}\Vert q\Vert_s
\end{equation}
if $t \in (1,\infty)$, that is, $s \in (1,\infty)$. As $1/t=1-1/s$ it follows for $s \in (1,\infty)$ that
\begin{equation*}
 (-\lambda)^\frac{2s-1}{2s}\leq 2^{-\frac{1}{s}}\left(\frac{s-1}{s}\right)^\frac{s-1}{s}\Vert q\Vert_s 
\end{equation*}
and hence 
\begin{equation}\label{warmupbound}
 \min\sigma(H)\geq -2^{-\frac{2}{2s-1}}\left(\frac{s-1}{s}\right)^\frac{2(s-1)}{2s-1}\Vert q\Vert_s^\frac{2s}{2s-1} 
 \, {\text{ for } \, s \in (1,\infty)}.     
\end{equation}
It is easy to see that the lower bound \eqref{warmupbound} also remains valid for $s=1$ (indeed, inequality \eqref{2.1} and the first inequality in \eqref{2.2} apply with $s=1$, $t = \infty$, $1/t' = 3/2$) in which case one obtains 
\begin{equation} \label{2.8}
\min\sigma(H)\geq - (1/4) \Vert q\Vert_1^2, 
\end{equation} 
and obviously applies to $s = \infty$, implying
\begin{equation} \label{2.9}
\min\sigma(H)\geq - \Vert q\Vert_{\infty}.
\end{equation} 
We mention that the bound \eqref{warmupbound} and \eqref{2.8} coincide with 
\cite[Corollary~14.3.11 and Corollary 14.3.12]{Da07} and that the above argument also leads to bounds for 
Schr\"{o}dinger operators with complex potentials $q\in L^s(\mathbb R)$, $s \in [1,\infty)$ (see, in particular, \cite{AAD01} for the case $s=1$). In the context of Schr\"odinger operators with complex-valued potentials we also refer, for instance, to  \cite{CI20}, \cite{Fr11}, \cite{Fr18}, and \cite{FLLS06}.

\begin{remark}
We mention that Lieb--Thirring inequalities (see, e.g., \cite{Fr21}, \cite{La12} and the extensive literature cited therein) also lead to lower bounds of $H$. More specifically,
for $s=3/2$ one can compare with the one-particle constant $L_1^{(1)} = 4\big/\big[3^{3/2} \pi\big] = 0.24503$ in \cite[Section 3.2]{Fr21}: The corresponding constant in \eqref{warmupbound} equals $2^{-1} 3^{-1/2} = 0.28867$.  
Historically, we note that Barnes, Brascamp, and Lieb \cite{BBL76} derived a lower bound for the ground state energy of (multi-dimensional) Schr\"odinger operators already in 1976.
\end{remark}

\section{Main results}
Now we consider the general Sturm-Liouville differential expression
\begin{equation}
	\tau = \frac{1}{r} \left(-\frac{\mathrm d}{\mathrm dx} p \frac{\mathrm d}{\mathrm dx} + q\right)
\end{equation} on $\mathbb R$ with real-valued coefficients under the standard assumptions, 
\begin{equation}
r,1/p,q\in L^1_{\rm loc}(\mathbb R), 
\end{equation} 
and we assume from now on that Hypothesis~\ref{hypo_dwig} below is satisfied. 
In the following $L^1_\mathrm u(\mathbb R)$ denotes the normed space of uniformly locally integrable functions, that is,
\begin{equation*}
	L^1_{\mathrm u}(\mathbb R) = \big\{h\in L^1_{\mathrm{loc}}(\mathbb R) : \lVert h\rVert_{\mathrm{u}} <\infty\big\},\qquad \lVert h\rVert _{\mathrm u} = \sup_{n\in\mathbb Z} \int_{n}^{n+1} \lvert h(t) \rvert\,\mathrm d t.
\end{equation*}

\begin{hypothesis} \label{h3.1} The real coefficients $p$, $q$ and $r$ of $\tau$ satisfy the following:
	\label{hypo_dwig}
	\begin{enumerate}
		\item[(a)] $p(x)>0$ for a.\,a.\ $x\in\mathbb R$ and $1/p\in L^\eta(\mathbb R)$ for some $\eta\in [1,\infty]$;
		\item[(b)] $q\in L^1_{\mathrm u}(\mathbb R)$;
		\item[(c)] $r(x)>0$ for a.\,a.\ $x\in\mathbb R$ and there exist $a$,~$b\in\mathbb R$ with $a<b$ such that 
		\begin{equation}
			\operatorname{ess\,inf}_{t\in \mathbb R\setminus [a,b]} r(t)>0.		\qedhere
		\end{equation}
	\end{enumerate}
\end{hypothesis}
It is known that the differential expression $\tau$ is in the limit-point case at both singular endpoints $\pm\infty$, and the corresponding 
maximal operator
\begin{equation*}
\begin{split}
& Tf = \tau f= \frac{1}{r} \left(-(p f')'  + qf\right),\\
& f \in D(T) = \bigl\{g\in L^2_r(\mathbb R) \,\big|\, g, pg' \in AC_{\rm loc}(\mathbb R); \, \tau g\in L^2_r(\mathbb R) \bigr\},
\end{split}
\end{equation*}
is self-adjoint in the weighted $L^2$-space $L^2_r(\mathbb R)$ and semibounded from below; cf. \cite[Lemma~A.2]{BST19}. Our main goal is to derive estimates for the lower bound $\min\sigma(T)$ of $T$.
For a nonnegative function $g\in L^\infty(\mathbb R)$ the set 
 	$$\Omega_g:=\{x\in\mathbb R \,|\, r(x)g(x)<1\}$$
 	and its Lebesgue measure $\mu(\Omega_g)$ will appear in the lower bound estimates
 	in our main results below.
 	In the particular case $1/r\in L^\infty(\mathbb R)$ one can choose $g=1/r$ 
 	and hence $\Omega_g$ becomes a Lebesgue null set, which leads to more explicit lower bound estimates. We also mention that for any $\varepsilon>0$ there exists a (constant)
 	nonnegative function $g_\varepsilon\in L^\infty(\mathbb R)$ such that $\mu(\Omega_{g_\varepsilon})<\varepsilon$; this follows from
 	\begin{equation*}
	\begin{split}
		\lim_{n\rightarrow \infty} \mu(\{x\in\mathbb R \,|\, r(x)<1/n\}) &= \mu \left(\bigcap_{n=1}^\infty \{x\in\mathbb R \,|\, r(x)<1/n\}\right)\\
		&= \mu(\{x\in\mathbb R \,|\, r(x)=0\})=0.
		\end{split}
	\end{equation*}

 	The first main result requires only minimal assumptions on the potential in 
 	Hypothesis~\ref{hypo_dwig}, that is, 
 	$q\in L^1_\mathrm u(\mathbb R)$, but we have to assume that $1/p\in L^\infty(\mathbb R)$.
We shall denote the negative part of the 
 	potential $q$ by $q_-$.
 	
\begin{theorem}\label{thm1}
In addition to Hypothesis \ref{h3.1}, assume that $1/p\in L^\infty(\mathbb R)$, let 
 	\begin{equation}
			      \label{erdnuss1}
			      \alpha= 2\lVert q_- \Vert_{\mathrm u} +4  \lVert 1/p \rVert_\infty \lVert q_-\Vert_{\mathrm u}^2\quad \text{and}
			      \quad \beta=\left(4\lVert 1/p \rVert_{\infty} \alpha\right)^{1/2},
		      \end{equation}
 	and choose a nonnegative function $g\in L^\infty(\mathbb R)$ such that $\mu(\Omega_g)\beta<1$.
	Then 
	\begin{equation*}
		\min\sigma(T) \geq \frac{-\alpha\lVert g \rVert_\infty}{1-\mu(\Omega_g)\beta}
	\end{equation*}
	and in the special case $1/r\in L^\infty(\mathbb R)$ the choice $g=1/r$ implies 
	$\mu(\Omega_g)=0$ and
	 $$
		\min\sigma(T) \geq -\bigl(2\lVert q_- \Vert_{\mathrm u} +4  \lVert 1/p \rVert_\infty \lVert q_-\Vert_{\mathrm u}^2\bigr)\lVert 1/r \rVert_\infty.
	$$
\end{theorem}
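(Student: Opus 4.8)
The plan is to pass to the quadratic form associated with $T$ and to reduce the claim to a single form inequality that I verify on a convenient core. For $f$ in the form domain one has, after one integration by parts (the endpoint contributions at $\pm\infty$ vanishing by the limit-point property),
\[
\mathfrak t[f]:=\langle Tf,f\rangle_{L^2_r}=\int_{\mathbb R}p\,\lvert f'\rvert^2\,\mathrm dx+\int_{\mathbb R}q\,\lvert f\rvert^2\,\mathrm dx ,
\]
and, $T$ being self-adjoint and bounded below, $\min\sigma(T)=\inf_{f\neq 0}\mathfrak t[f]\big/\lVert f\rVert_{L^2_r}^2$. It thus suffices to show $\mathfrak t[f]\ge -C\,\lVert f\rVert_{L^2_r}^2$ with $C=\alpha\lVert g\rVert_\infty\big/(1-\mu(\Omega_g)\beta)$; I would check this for compactly supported $f$ in the form domain, where every Lebesgue integral appearing below is finite, and then extend by density.

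Step one controls $q_-$. On each unit interval $I_n=[n,n+1]$ the elementary estimate $\lVert f\rVert_{L^\infty(I_n)}^2\le \lVert f\rVert_{L^2(I_n)}^2+2\lVert f\rVert_{L^2(I_n)}\lVert f'\rVert_{L^2(I_n)}$ holds (integrate $(\lvert f\rvert^2)'$ from a point where $\lvert f\rvert^2$ attains its average on $I_n$). Multiplying by $\int_{I_n}q_-\le\lVert q_-\rVert_{\mathrm u}$, summing over $n\in\mathbb Z$, and using the Cauchy--Schwarz inequality for the sequences $(\lVert f\rVert_{L^2(I_n)})_n$, $(\lVert f'\rVert_{L^2(I_n)})_n$ yields $\int_{\mathbb R}q_-\lvert f\rvert^2\,\mathrm dx\le \lVert q_-\rVert_{\mathrm u}(\lVert f\rVert_2^2+2\lVert f\rVert_2\lVert f'\rVert_2)$. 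Since $1/p\in L^\infty(\mathbb R)$ gives $p\ge 1/\lVert 1/p\rVert_\infty$ a.e., one has $\lVert f'\rVert_2^2\le \lVert 1/p\rVert_\infty\int_{\mathbb R}p\lvert f'\rvert^2\,\mathrm dx$; inserting this and applying Young's inequality $2ab\le\tfrac12a^2+2b^2$ to peel off $\tfrac12\int p\lvert f'\rvert^2$ produces, with $\alpha$ as in \eqref{erdnuss1},
\[
\int_{\mathbb R}q_-\,\lvert f\rvert^2\,\mathrm dx\le \tfrac12\int_{\mathbb R}p\,\lvert f'\rvert^2\,\mathrm dx+\tfrac{\alpha}{2}\,\lVert f\rVert_2^2 .
\]
Hence $\mathfrak t[f]\ge\tfrac12\int_{\mathbb R}p\lvert f'\rvert^2\,\mathrm dx-\tfrac{\alpha}{2}\lVert f\rVert_2^2$, which I record in the two forms $\mathfrak t[f]\ge-\alpha\lVert f\rVert_2^2$ and $\int_{\mathbb R}p\lvert f'\rvert^2\,\mathrm dx\le 2\mathfrak t[f]+\alpha\lVert f\rVert_2^2$.

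Step two converts the unweighted norm into the weighted one. Splitting $\mathbb R$ into $\Omega_g$ and its complement and using $rg\ge1$ off $\Omega_g$ (so $\lvert f\rvert^2\le\lVert g\rVert_\infty\,r\lvert f\rvert^2$ there) together with $\int_{\Omega_g}\lvert f\rvert^2\le\mu(\Omega_g)\lVert f\rVert_\infty^2$ gives $\lVert f\rVert_2^2\le\lVert g\rVert_\infty\lVert f\rVert_{L^2_r}^2+\mu(\Omega_g)\lVert f\rVert_\infty^2$. I then estimate $\lVert f\rVert_\infty^2\le 2\lVert f\rVert_2\lVert f'\rVert_2$, bound $\lVert f'\rVert_2^2\le\lVert1/p\rVert_\infty\int p\lvert f'\rvert^2$, and apply Young's inequality with the parameter $s=(\lVert1/p\rVert_\infty\alpha)^{1/2}$, chosen by AM--GM so that $s+\lVert1/p\rVert_\infty\alpha/s=\beta$. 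Feeding in $\int p\lvert f'\rvert^2\le 2\mathfrak t[f]+\alpha\lVert f\rVert_2^2$ from step one then leads to
\[
(1-\mu(\Omega_g)\beta)\,\lVert f\rVert_2^2\le\lVert g\rVert_\infty\lVert f\rVert_{L^2_r}^2+2\lVert1/p\rVert_\infty\,s^{-1}\mu(\Omega_g)\,\mathfrak t[f].
\]

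Finally I close the estimate. If $\mathfrak t[f]\ge0$ there is nothing to prove, so assume $\mathfrak t[f]<0$; then the last summand is nonpositive and may be dropped, and the hypothesis $\mu(\Omega_g)\beta<1$ permits division, giving $\lVert f\rVert_2^2\le\lVert g\rVert_\infty\lVert f\rVert_{L^2_r}^2\big/(1-\mu(\Omega_g)\beta)$. Combining this with $\mathfrak t[f]\ge-\alpha\lVert f\rVert_2^2$ yields precisely $\mathfrak t[f]\ge-\alpha\lVert g\rVert_\infty\lVert f\rVert_{L^2_r}^2\big/(1-\mu(\Omega_g)\beta)$, as claimed; the special case $1/r\in L^\infty(\mathbb R)$ is recovered by taking $g=1/r$, for which $\Omega_g$ is a null set and the denominator is $1$. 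I expect the main obstacle to be step two: the quantities $\lVert f\rVert_2$, $\lVert f\rVert_\infty$ and $\int p\lvert f'\rvert^2$ are mutually entangled, and the estimate only closes because the Young parameter is tuned to the value $s$ that collapses the self-referential coefficient to $1-\mu(\Omega_g)\beta$, while the sign bookkeeping discards the favourable $\mathfrak t[f]$-term exactly in the regime $\mathfrak t[f]<0$ where the bound is needed.
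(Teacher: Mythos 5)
Your argument is correct and follows essentially the same route as the paper: the same unit-interval estimate for $\int q_-\lvert f\rvert^2$ with the tuned absorption of $\tfrac12\int p\lvert f'\rvert^2$, the same $\Omega_g$-splitting of $\lVert f\rVert_2^2$, and the same optimization producing $\beta=(4\lVert 1/p\rVert_\infty\alpha)^{1/2}$; your device of carrying $\mathfrak t[f]$ through and discarding it when $\mathfrak t[f]<0$ is the paper's restriction to $D_-(T)=\{f:(Tf,f)_r\le 0\}$ in disguise. The only soft spot is the unproved assertion that compactly supported functions form a form core; the paper sidesteps this by running the estimates directly on the maximal domain $D(T)$, where the required decay and integrability are supplied by Lemma~\ref{krolik}.
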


In the next result we consider the case $q_-\in L^s(\mathbb R)$, $s\in [1,\infty]$, and $1/p\in L^\infty(\mathbb R)$.

\begin{theorem}\label{thm2}
In addition to Hypothesis \ref{h3.1}, assume that $1/p\in L^\infty(\mathbb R)$ and $q_-\in L^s(\mathbb R)$ for some $s\in [1,\infty]$,
 	let 
 	\begin{equation}
			      \label{erdnuss10}
			      \alpha=\begin{cases}  
						\Vert q_- \Vert_s \beta^{\frac{1}{s}} & \text{if}\,\,s\in[1,\infty), \\[1ex]
						\Vert q_- \Vert_\infty & \text{if}\,\,s=\infty,\end{cases}  
			      \quad \text{and}
			      \quad \beta=\begin{cases} \left
						(4\lVert 1/p \rVert_\infty \lVert q_- \Vert_{s}\right)^{\frac{s}{2s-1}}  & \text{if}\,\,s\in[1,\infty), \\[1ex] 
						(4\lVert 1/p \rVert_\infty \lVert q_- \Vert_{\infty})^{1/2}& \text{if}\,\,s=\infty,\end{cases} 
		      \end{equation}
 	and choose a nonnegative function $g\in L^\infty(\mathbb R)$ such that $\mu(\Omega_g)\beta<1$.
	Then 
	\begin{equation*}
		\min\sigma(T) \geq \frac{-\alpha\lVert g \rVert_\infty}{1-\mu(\Omega_g)\beta}
	\end{equation*}
	and in the special case $1/r\in L^\infty(\mathbb R)$ the choice $g=1/r$ implies 
	$\mu(\Omega_g)=0$ and
	\begin{equation}\label{ojemine}
	\min\sigma(T) \geq  - \bigl(4\lVert 1/p \rVert_\infty\bigr)^{\frac{1}{2s-1}} \lVert q_- \Vert_{s}^{\frac{2s}{2s-1}}\lVert 1/r \rVert_\infty   \quad \text{if } s\in[1,\infty).
	\end{equation}
	If $s=\infty$ we have
	$$
	\min\sigma(T) \geq -\Vert q_- \Vert_\infty \lVert 1/r \rVert_\infty.
	$$
\end{theorem}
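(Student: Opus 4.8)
The plan is to pass to the sesquilinear form of $T$ and reduce the asserted bound to a single functional inequality, following the scheme that underlies Theorem~\ref{thm1} but with the uniformly-local estimate replaced by H\"older's inequality against $\lVert q_-\rVert_s$. Since $T$ is self-adjoint and bounded below, $\min\sigma(T)\ge -\lambda_0$ is equivalent to $\int_{\mathbb R}\bigl(p\lvert f'\rvert^2+q\lvert f\rvert^2\bigr)\,\mathrm dx\ge -\lambda_0\lVert f\rVert_r^2$ for all $f$ in the form domain, where $\lVert f\rVert_r^2=\int_{\mathbb R}\lvert f\rvert^2r\,\mathrm dx$ and $\lambda_0=\alpha\lVert g\rVert_\infty/(1-\mu(\Omega_g)\beta)$. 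Using $q\ge -q_-$ pointwise, it suffices to establish
\[
\int_{\mathbb R}q_-\lvert f\rvert^2\,\mathrm dx\le \int_{\mathbb R}p\lvert f'\rvert^2\,\mathrm dx+\lambda_0\lVert f\rVert_r^2 .
\]
Hypothesis~\ref{h3.1}(c) guarantees that the unweighted norms occurring below are finite: outside $[a,b]$ the weight $r$ is bounded away from $0$, so $f\in L^2(\mathbb R\setminus[a,b])$, while on $[a,b]$ the absolute continuity of $f$ together with $\int_a^b p\lvert f'\rvert^2\,\mathrm dx<\infty$ and $1/p\in L^\infty$ forces $f$ to be bounded; hence $f\in H^1(\mathbb R)\cap L^\infty(\mathbb R)$.

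For the left-hand side I would apply H\"older's inequality. When $s=\infty$ this yields $\int_{\mathbb R}q_-\lvert f\rvert^2\,\mathrm dx\le\lVert q_-\rVert_\infty\lVert f\rVert_2^2$, and when $s\in[1,\infty)$ it yields $\lVert q_-\rVert_s\lVert f\rVert_{2s'}^2$ with $1/s+1/s'=1$. In the latter case I would interpolate by the one-dimensional Gagliardo--Nirenberg inequality $\lVert f\rVert_{2s'}\le C\lVert f'\rVert_2^{1/(2s)}\lVert f\rVert_2^{(2s-1)/(2s)}$ (the exponent $1/(2s)$ being fixed by scaling, and the case $s=1$ reducing to the Agmon inequality $\lVert f\rVert_\infty^2\le\lVert f\rVert_2\lVert f'\rVert_2$), and then use $\lVert f'\rVert_2^2\le\lVert 1/p\rVert_\infty\int_{\mathbb R}p\lvert f'\rvert^2\,\mathrm dx$. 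This reduces the left-hand side to the two unweighted quantities $\int_{\mathbb R}p\lvert f'\rvert^2\,\mathrm dx$ and $\lVert f\rVert_2^2$; the subsequent Young split of the product $\bigl(\int_{\mathbb R}p\lvert f'\rvert^2\,\mathrm dx\bigr)^{1/(2s)}\lVert f\rVert_2^{(2s-1)/s}$ into a multiple of $\int_{\mathbb R}p\lvert f'\rvert^2\,\mathrm dx$ plus a multiple of $\lVert f\rVert_2^2$ is exactly what produces the exponents $s/(2s-1)$, $1/s$ and $2s/(2s-1)$ appearing in $\alpha$, $\beta$ and in \eqref{ojemine}.

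It remains to convert $\lVert f\rVert_2^2$ into the weighted norm $\lVert f\rVert_r^2$, and this is where $\Omega_g$ enters. On $\mathbb R\setminus\Omega_g$ one has $rg\ge1$, hence $\lvert f\rvert^2\le\lVert g\rVert_\infty r\lvert f\rvert^2$, while on $\Omega_g$ only the crude bound $\int_{\Omega_g}\lvert f\rvert^2\,\mathrm dx\le\mu(\Omega_g)\lVert f\rVert_\infty^2$ is available; together these give $\lVert f\rVert_2^2\le\mu(\Omega_g)\lVert f\rVert_\infty^2+\lVert g\rVert_\infty\lVert f\rVert_r^2$. Bounding $\lVert f\rVert_\infty^2\le\lVert 1/p\rVert_\infty^{1/2}\lVert f\rVert_2\bigl(\int_{\mathbb R}p\lvert f'\rvert^2\,\mathrm dx\bigr)^{1/2}$ by Agmon reinserts both $\lVert f\rVert_2$ and $\int_{\mathbb R}p\lvert f'\rvert^2\,\mathrm dx$, so the estimate becomes self-referential. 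The main obstacle is to close this loop with sharp constants: solving the resulting quadratic inequality for $\lVert f\rVert_\infty^2$ and splitting the cross term by Young's inequality with a parameter of order $\beta/(1-\mu(\Omega_g)\beta)$, one must arrange simultaneously that the coefficient of $\int_{\mathbb R}p\lvert f'\rvert^2\,\mathrm dx$ stays below the single copy available on the right and that the coefficient of $\lVert f\rVert_r^2$ collapses to exactly $\lambda_0$. The denominator $1-\mu(\Omega_g)\beta$ is precisely what appears upon solving $\lVert f\rVert_2^2\le\mu(\Omega_g)\lVert f\rVert_\infty^2+\lVert g\rVert_\infty\lVert f\rVert_r^2$ for $\lVert f\rVert_2^2$ after the absorption, and the hypothesis $\mu(\Omega_g)\beta<1$ is exactly the condition keeping it positive and the $\int_{\mathbb R}p\lvert f'\rvert^2\,\mathrm dx$-coefficient admissible throughout the whole range.

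Finally, in the special case $1/r\in L^\infty(\mathbb R)$ I would take $g=1/r$, so that $\Omega_g$ is a null set, $\mu(\Omega_g)=0$, and the entire absorption step disappears: the decomposition collapses to $\lVert f\rVert_2^2\le\lVert 1/r\rVert_\infty\lVert f\rVert_r^2$. Substituting this directly into the H\"older/Gagliardo--Nirenberg estimate and optimizing the Young split yields the closed-form bound \eqref{ojemine} for $s\in[1,\infty)$, and for $s=\infty$ it gives at once $\int_{\mathbb R}q_-\lvert f\rvert^2\,\mathrm dx\le\lVert q_-\rVert_\infty\lVert 1/r\rVert_\infty\lVert f\rVert_r^2$, that is, $\min\sigma(T)\ge-\lVert q_-\rVert_\infty\lVert 1/r\rVert_\infty$.
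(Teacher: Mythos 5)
Your outline goes in the right general direction (H\"older against $\lVert q_-\rVert_s$, an Agmon/Gagliardo--Nirenberg interpolation, and the $\Omega_g$ decomposition to pass from $\lVert f\rVert_2$ to $\lVert f\rVert_r$), and in the special case $\mu(\Omega_g)=0$ your Young-split of $P^{1/(2s)}\lVert f\rVert_2^{(2s-1)/s}$ with $P=\int_{\mathbb R}p\lvert f'\rvert^2\,\mathrm dx$ does close and even yields a slightly better constant than \eqref{ojemine}. But in the general case $\mu(\Omega_g)>0$ there is a genuine gap, and you flag it yourself without resolving it: ``the main obstacle is to close this loop with sharp constants'' is precisely the step that is missing. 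Working with arbitrary $f$ in the form domain, no inequality of the form $\lVert f\rVert_\infty^2\leq \beta\lVert f\rVert_2^2$ can hold (concentrate a bump), so after you insert the Agmon bound into $\lVert f\rVert_2^2\leq \mu(\Omega_g)\lVert f\rVert_\infty^2+\lVert g\rVert_\infty\lVert f\rVert_r^2$ and then feed $\lVert f\rVert_2$ back into the H\"older estimate, you are left with an unabsorbed multiple of $P$ whose coefficient depends on $\mu(\Omega_g)$, $\lVert 1/p\rVert_\infty$ and $\lVert q_-\rVert_s$ in a way that does not reduce to the theorem's condition $\mu(\Omega_g)\beta<1$, nor to the stated bound $-\alpha\lVert g\rVert_\infty/(1-\mu(\Omega_g)\beta)$. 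Asserting that ``one must arrange'' the coefficients to work out is not a proof of that arrangement.

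The idea you are missing is the paper's reduction to the set $D_-(T)=\{f\in D(T): (Tf,f)_r\leq 0\}$: since $\min\sigma(T)\geq -\lambda_0$ only needs to be checked against such $f$, one may use the a priori bound $\lVert\sqrt p f'\rVert_2^2\leq\lVert q_-f^2\rVert_1$ (Lemma~\ref{A.6}) from the outset. Combining this with H\"older, $\lVert q_-f^2\rVert_1\leq\lVert q_-\rVert_s\lVert f\rVert_\infty^{2/s}\lVert f\rVert_2^{2(s-1)/s}$, and with $\lVert f\rVert_\infty^4\leq 4\lVert 1/p\rVert_\infty\lVert\sqrt p f'\rVert_2^2\lVert f\rVert_2^2$, the factor $\lVert f\rVert_\infty^{2/s}$ cancels \emph{exactly} in the quotient $\bigl(\lVert f\rVert_\infty^4/\lVert f\rVert_\infty^{2/s}\bigr)^{s/(2s-1)}$ -- no Young splitting is needed -- and one obtains the clean bounds $\lVert f\rVert_\infty^2\leq\beta\lVert f\rVert_2^2$ and $\lVert q_-f^2\rVert_1\leq\alpha\lVert f\rVert_2^2$ on $D_-(T)$ with precisely the $\alpha,\beta$ of \eqref{erdnuss10}. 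These two inequalities are exactly the input of Lemma~\ref{elfo}, where the $\Omega_g$ decomposition gives $\lVert g\rVert_\infty(f,f)_r\geq(1-\mu(\Omega_g)\beta)\lVert f\rVert_2^2$ and $(Tf,f)_r\geq-\alpha\lVert f\rVert_2^2$, hence the stated bound. Without the restriction to $D_-(T)$ and the resulting $L^\infty$--$L^2$ comparison, the absorption on $\Omega_g$ cannot be carried out in the stated form.
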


\begin{remark}\label{remmi}
The bounds in Theorem~\ref{thm2} above are not optimal. In fact, in the special case $p=r=1$ and $q_-\in L^s(\mathbb R)$ for some $s\in [1,\infty)$
the bound in \eqref{ojemine} becomes 
\begin{equation*}
	\min\sigma(T) \geq \begin{cases} -4   \lVert q_- \Vert_{1}^2 & \text{if}\,\,s=1, \\[1ex] 
	- 2^{\frac{2}{2s-1}} \lVert q_- \Vert_{s}^{\frac{2s}{2s-1}}  & \text{if}\,\,s\in(1,\infty),\end{cases}
	\end{equation*}
while  \eqref{warmupbound} (or \cite[Corollary~14.3.11 and Corollary 14.3.12]{Da07}) show  that 
	\begin{equation}
		\min\sigma(T)\geq \begin{cases} - \lVert q\rVert_1^2\big/4 & \text{if}\,\, s=1,\\[1ex] 
			- 2^{-\frac{2}{2s-1}} \left(\frac{s-1}{s}\right)^{\frac{2(s-1)}{2s-1}} \lVert q\rVert_s^{\frac{2s}{2s-1}} & \text{if}\,\, s\in (1,\infty).
		\end{cases}
	\end{equation}
\hfill $\diamond$	
\end{remark}

In the following theorem we deal with $q_-\in L^s(\mathbb R)$, $s\in [1,\infty]$, and $1/p\in L^\eta(\mathbb R)$, $\eta\in [1,\infty)$.

\begin{theorem}\label{thm3} 
In addition to Hypothesis \ref{h3.1}, assume that $1/p\in L^\eta(\mathbb R)$ for some $\eta\in [1,\infty)$ and $q_-\in L^s(\mathbb R)$ for some $s\in [1,\infty]$ such that $\eta+s >2$ if $s\not=\infty$. 
 	Let $\alpha$ be as in \eqref{erdnuss10}
%  	\begin{equation}
% 			      \label{erdnuss11}
% 			      \alpha=\begin{cases}  \Vert q_- \Vert_s \beta^{\frac{1}{s}} & \text{if}\,\,s\in[1,\infty), \\ \Vert q_- \Vert_\infty & \text{if}\,\,s=\infty,\end{cases}
% 	\end{equation}
	and
	\begin{equation}\label{olala}
	\beta=\begin{cases} \Bigl(\Bigl(\frac{2\eta-1}{\eta}\Bigr)^2\lVert 1/p \rVert_\eta \lVert q_- \Vert_s\Bigr)^{\frac{\eta s}{2\eta s -\eta -s}}  & \text{if}\,\,s\in[1,\infty), \\[1ex] 
		\Bigl(\Bigl(\frac{2\eta-1}{\eta}\Bigr)^2 \lVert 1/p \rVert_\eta \lVert q_- \Vert_\infty\Bigr)^{\frac{\eta}{2\eta-1}} & \text{if}\,\,s=\infty,\end{cases} 
		      \end{equation}
 	and choose a nonnegative function $g\in L^\infty(\mathbb R)$ such that $\mu(\Omega_g)\beta<1$.
	Then 
	\begin{equation*}
		\min\sigma(T) \geq \frac{-\alpha\lVert g \rVert_\infty}{1-\mu(\Omega_g)\beta}
	\end{equation*}
	and in the special case $1/r\in L^\infty(\mathbb R)$ the choice $g=1/r$ implies 
	$\mu(\Omega_g)=0$ and
	\begin{equation*}
	\min\sigma(T) \geq
	-\Bigl(\Bigl(\frac{2\eta-1}{\eta}\Bigr)^2\lVert 1/p \rVert_\eta \Bigr)^{\frac{\eta}{2\eta s -\eta -s}} 
	\lVert q_- \Vert_{s}^{\frac{2\eta s-s}{2\eta s -\eta -s}} \lVert 1/r \rVert_\infty  \quad\text{if } s\in[1,\infty).
	\end{equation*}
	If $s=\infty$ we have
	$$
	\min\sigma(T) \geq-\Vert q_- \Vert_\infty \lVert 1/r \rVert_\infty. 
	$$
\end{theorem}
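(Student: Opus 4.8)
The plan is to pass to the quadratic form of $T$ and to estimate its lowest Rayleigh quotient. For $f\in C_c^\infty(\mathbb R)$, which form a core for the quadratic form of the semibounded self-adjoint operator $T$ (cf.\ \cite[Lemma~A.2]{BST19}), integration by parts gives $\langle Tf,f\rangle_{L^2_r}=\int_{\mathbb R}\bigl(p\lvert f'\rvert^2+q\lvert f\rvert^2\bigr)\,\mathrm dx=:Q(f)$, the weight $r$ cancelling, so that $\min\sigma(T)=\inf\{Q(f): f\in C_c^\infty(\mathbb R),\ \lVert f\rVert_{L^2_r}=1\}$. Abbreviating $N=\lVert f\rVert_2^2$, $K=\int_{\mathbb R}p\lvert f'\rvert^2\,\mathrm dx$ and $M=\lVert f\rVert_\infty^2$, and using $Q(f)\ge K-\int_{\mathbb R}q_-\lvert f\rvert^2\,\mathrm dx$, I would reduce the theorem to two estimates with $\alpha,\beta$ as in the statement: an \emph{energy estimate} $Q(f)\ge-\alpha N$ valid for all such $f$, and a \emph{sup-norm estimate} $M\le\beta N$ valid for every $f$ with $Q(f)\le0$. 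Functions with $Q(f)>0$ never threaten the (negative) bound, so only the latter $f$ matter.

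The heart of the argument, and the point where $1/p\in L^\eta$ with $\eta<\infty$ genuinely differs from the case $1/p\in L^\infty$, is the interpolation inequality
\begin{equation*}
M=\lVert f\rVert_\infty^2\le \lVert 1/p\rVert_\eta^{\gamma}\,K^{\gamma}\,N^{1-\gamma},\qquad \gamma:=\frac{\eta}{2\eta-1}.
\end{equation*}
I would derive it from the pointwise bound $\lVert f\rVert_\infty^2\le\int_{\mathbb R}\lvert f\rvert\lvert f'\rvert\,\mathrm dx$ (valid since $f$ has compact support), followed by Cauchy--Schwarz in the form $\int\lvert f\rvert\lvert f'\rvert=\int\lvert f\rvert p^{-1/2}\,p^{1/2}\lvert f'\rvert\le(\int\lvert f\rvert^2/p)^{1/2}K^{1/2}$, then Hölder with exponents $\eta,\eta'$ to get $\int\lvert f\rvert^2/p\le\lVert 1/p\rVert_\eta\lVert f\rVert_{2\eta'}^2$, and finally the elementary interpolation $\lVert f\rVert_{2\eta'}\le M^{1/(2\eta)}N^{(\eta-1)/(2\eta)}$. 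Collecting powers of $M$ (total exponent $1-1/(2\eta)=(2\eta-1)/(2\eta)$) and solving yields the displayed inequality; note it reduces to $M\le\lVert1/p\rVert_\infty^{1/2}K^{1/2}N^{1/2}$ as $\eta\to\infty$, recovering the mechanism behind Theorems~\ref{thm1} and \ref{thm2}.

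With this in hand I would prove both estimates. For $s=\infty$ the energy estimate is immediate, $\int q_-\lvert f\rvert^2\le\lVert q_-\rVert_\infty N=\alpha N$; for $s\in[1,\infty)$, Hölder gives $\int q_-\lvert f\rvert^2\le\lVert q_-\rVert_s\lVert f\rVert_{2s'}^2\le\lVert q_-\rVert_s M^{1/s}N^{(s-1)/s}$, into which I substitute the interpolation inequality to obtain $\int q_-\lvert f\rvert^2\le C_1\,K^{a}N^{b}$ with $a=\gamma/s$, $b=1-a$ and $C_1=\lVert q_-\rVert_s\lVert1/p\rVert_\eta^{\gamma/s}$. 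A weighted Young inequality then absorbs part of $K$ and leaves $Q(f)\ge-\alpha N$; the particular split chosen to match the stated $\alpha$ is what introduces the factor $\bigl(\tfrac{2\eta-1}{\eta}\bigr)^2=\gamma^{-2}$ through the conjugate exponent $1/a$. For the sup-norm estimate, if $Q(f)\le0$ then $K\le\int q_-\lvert f\rvert^2\le C_1K^aN^b$, whence $K\le C_1^{1/b}N$; feeding this into the interpolation inequality gives $M\le\lVert1/p\rVert_\eta^{\gamma}C_1^{\gamma/b}N=\beta N$. The admissibility condition $\eta+s>2$ (equivalently $2\eta s-\eta-s>0$) is exactly what guarantees $a<1$ and $b>0$, so that these Young and power manipulations are legitimate.

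Finally I would convert the Lebesgue $L^2$-control into $L^2_r$-control. Normalising $\lVert f\rVert_{L^2_r}=1$ and splitting $N=\int_{\Omega_g}\lvert f\rvert^2+\int_{\mathbb R\setminus\Omega_g}\lvert f\rvert^2$, on $\mathbb R\setminus\Omega_g$ one has $rg\ge1$ and $g\le\lVert g\rVert_\infty$, hence $r\ge1/\lVert g\rVert_\infty$ and $\int_{\mathbb R\setminus\Omega_g}\lvert f\rvert^2\le\lVert g\rVert_\infty\lVert f\rVert_{L^2_r}^2=\lVert g\rVert_\infty$, while $\int_{\Omega_g}\lvert f\rvert^2\le\mu(\Omega_g)M\le\mu(\Omega_g)\beta N$. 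Thus $N\le\lVert g\rVert_\infty+\mu(\Omega_g)\beta N$, and since $\mu(\Omega_g)\beta<1$ this gives $N\le\lVert g\rVert_\infty/(1-\mu(\Omega_g)\beta)$; combined with $Q(f)\ge-\alpha N$ this is exactly $\min\sigma(T)\ge-\alpha\lVert g\rVert_\infty/(1-\mu(\Omega_g)\beta)$. For the special case $1/r\in L^\infty$ the choice $g=1/r$ makes $\Omega_g$ empty, so $\mu(\Omega_g)=0$ and $\lVert g\rVert_\infty=\lVert1/r\rVert_\infty$, and inserting the explicit $\alpha=\lVert q_-\rVert_s\beta^{1/s}$ and simplifying the exponents yields the displayed closed form (and $-\lVert q_-\rVert_\infty\lVert1/r\rVert_\infty$ when $s=\infty$). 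The main obstacle is the two-parameter interpolation tying together the integrability of $q_-$ and of $1/p$: keeping the exponents admissible and tracking the constants through the Hölder, Gagliardo--Nirenberg and Young steps to arrive precisely at the stated $\alpha,\beta$; as Remark~\ref{remmi} already indicates for Theorem~\ref{thm2}, these constants are not sharp, and the factor $\bigl(\tfrac{2\eta-1}{\eta}\bigr)^2$ reflects a convenient rather than optimal choice in the Young step.
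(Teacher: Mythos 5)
Your proposal follows essentially the same route as the paper: reduce to functions with nonpositive Rayleigh quotient, establish the two estimates $\lVert q_-f^2\rVert_1\le\alpha\lVert f\rVert_2^2$ and $\lVert f\rVert_\infty^2\le\beta\lVert f\rVert_2^2$ by combining the interpolation inequality for $\lVert f\rVert_\infty$ in terms of $\lVert\sqrt p f'\rVert_2$ and $\lVert f\rVert_2$ (the paper's Lemma~\ref{Chewbacca}\,(i)) with the H\"older step \eqref{qMinusEstS} and with $\lVert\sqrt p f'\rVert_2^2\le\lVert q_-f^2\rVert_1$ (Lemma~\ref{A.6}), and then convert to the weighted norm via the $\Omega_g$ splitting (Lemma~\ref{elfo}). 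Your rederivation of the interpolation inequality even yields a slightly better constant (without the factor $\bigl(\tfrac{2\eta-1}{\eta}\bigr)^{2\eta/(2\eta-1)}$), which only strengthens the stated bounds, and your ``plug back into H\"older'' step for $\alpha$ is exactly the paper's; the weighted-Young variant you mention is an unnecessary detour but not wrong.

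The one genuine gap is the opening reduction to $f\in C_c^\infty(\mathbb R)$ as a form core. Hypothesis~\ref{h3.1} only requires $1/p\in L^1_{\mathrm{loc}}(\mathbb R)$ (plus $1/p\in L^\eta$), so $p$ itself need not be locally integrable; for such $p$ a smooth compactly supported $f$ with $f'\neq 0$ on a bad set can have $\int p\lvert f'\rvert^2=\infty$, so $C_c^\infty(\mathbb R)$ need not even be contained in the form domain, and the cited reference does not assert the core property. The paper avoids this entirely by working directly with $f\in D(T)$: the integration by parts $(Tf,f)_r=\lVert\sqrt p f'\rVert_2^2+\int q\lvert f\rvert^2$ is justified by the vanishing of the boundary terms (Lemma~\ref{krolik}\,(iii)), and your pointwise bound $\lVert f\rVert_\infty^2\le\int\lvert f\rvert\lvert f'\rvert$ survives because $f(x_n)\to0$ along sequences tending to $\pm\infty$ (Lemma~\ref{krolik}\,(ii)); since $T$ is self-adjoint, $\min\sigma(T)$ is the infimum of the Rayleigh quotient over $D(T)$, which is all you need. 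With that substitution your argument is complete and correct.
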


The following proposition provides a simple condition for nonnegativity of $T$.

\begin{proposition}\label{propi}
 	Suppose that Hypothesis~\ref{hypo_dwig} holds and assume that $1/p,q_-\in L^1(\mathbb R)$. If $\lVert 1/p \rVert_1\lVert q_- \Vert_1<1$ 
 	then $\min\sigma(T)\geq 0$.
 \end{proposition}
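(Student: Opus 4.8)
The plan is to prove nonnegativity at the level of the quadratic form. Since $T$ is self-adjoint and bounded from below, the spectral theorem shows that $\min\sigma(T)\ge 0$ is equivalent to $\langle Tf,f\rangle_{L^2_r}\ge 0$ for every $f\in D(T)$; so it suffices to establish the latter. For $f\in D(T)$ the limit-point case of $\tau$ at $\pm\infty$ makes the boundary terms in the Lagrange identity vanish, and integration by parts yields the form representation
$$
\langle Tf,f\rangle_{L^2_r}=\int_{\mathbb R}\bigl(p\lvert f'\rvert^2+q\lvert f\rvert^2\bigr)\,\mathrm dx ,
$$
which is exactly the sesquilinear form used in the construction of $T$ in \cite[Lemma~A.2]{BST19}; in particular $\int_{\mathbb R}p\lvert f'\rvert^2\,\mathrm dx<\infty$.

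Next I would record that such $f$ decays at infinity. Because $1/p\in L^1(\mathbb R)$, Cauchy--Schwarz gives $\int_{\mathbb R}\lvert f'\rvert\,\mathrm dx=\int_{\mathbb R}p^{-1/2}\,p^{1/2}\lvert f'\rvert\,\mathrm dx\le \lVert 1/p\rVert_1^{1/2}\bigl(\int_{\mathbb R}p\lvert f'\rvert^2\,\mathrm dx\bigr)^{1/2}<\infty$, so $f'\in L^1(\mathbb R)$ and $f$ has finite limits at $\pm\infty$. Since $f\in L^2_r(\mathbb R)$ and, by Hypothesis~\ref{hypo_dwig}\,(c), $r$ is bounded away from $0$ outside the compact interval $[a,b]$, these limits must vanish, whence $f(x)=\int_{-\infty}^x f'(t)\,\mathrm dt$.

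The analytic heart is a Sobolev-type pointwise bound. Applying Cauchy--Schwarz to this representation of $f(x)$ gives, for every $x\in\mathbb R$,
$$
\lvert f(x)\rvert^2\le \Bigl(\int_{\mathbb R}p^{-1}\,\mathrm dt\Bigr)\Bigl(\int_{\mathbb R}p\lvert f'\rvert^2\,\mathrm dt\Bigr)=\lVert 1/p\rVert_1\int_{\mathbb R}p\lvert f'\rvert^2\,\mathrm dx ,
$$
so that $\sup_{x\in\mathbb R}\lvert f(x)\rvert^2\le \lVert 1/p\rVert_1\int_{\mathbb R}p\lvert f'\rvert^2\,\mathrm dx$; this is where the $L^1$-norm of $1/p$ enters. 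Consequently $\int_{\mathbb R}q_-\lvert f\rvert^2\,\mathrm dx\le \lVert q_-\rVert_1\sup_x\lvert f(x)\rvert^2\le \lVert 1/p\rVert_1\lVert q_-\rVert_1\int_{\mathbb R}p\lvert f'\rvert^2\,\mathrm dx$.

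Combining these with $q\ge -q_-$ then yields
$$
\langle Tf,f\rangle_{L^2_r}\ge \int_{\mathbb R}p\lvert f'\rvert^2\,\mathrm dx-\int_{\mathbb R}q_-\lvert f\rvert^2\,\mathrm dx\ge \bigl(1-\lVert 1/p\rVert_1\lVert q_-\rVert_1\bigr)\int_{\mathbb R}p\lvert f'\rvert^2\,\mathrm dx\ge 0 ,
$$
the last step using the hypothesis $\lVert 1/p\rVert_1\lVert q_-\rVert_1<1$. As $f\in D(T)$ was arbitrary, $\min\sigma(T)\ge 0$. The only delicate bookkeeping is the vanishing of the boundary terms and the decay $f(\pm\infty)=0$ (both consequences of the limit-point case and Hypothesis~\ref{hypo_dwig}\,(c)); the substantive inequality is the elementary bound $\sup\lvert f\rvert^2\le\lVert 1/p\rVert_1\int_{\mathbb R}p\lvert f'\rvert^2\,\mathrm dx$.
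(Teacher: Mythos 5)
Your proof is correct and follows essentially the same route as the paper: the substantive inequality $\lVert f\rVert_\infty^2\le\lVert 1/p\rVert_1\,\lVert\sqrt{p}f'\rVert_2^2$ that you rederive from the representation $f(x)=\int_{-\infty}^x f'(t)\,\mathrm dt$ is exactly Lemma~\ref{Chewbacca}\,(i) with $\eta=1$, and it is combined with the same estimate $\lVert q_-f^2\rVert_1\le\lVert q_-\rVert_1\lVert f\rVert_\infty^2$. The only difference is packaging: the paper restricts to the set $D_-(T)$ and uses Lemma~\ref{A.6} to conclude $\lVert f\rVert_\infty=0$, hence $D_-(T)=\{0\}$, whereas you bound the form from below by $\bigl(1-\lVert 1/p\rVert_1\lVert q_-\rVert_1\bigr)\lVert\sqrt{p}f'\rVert_2^2$ directly for every $f\in D(T)$ --- a logically equivalent reformulation of the same argument.
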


\begin{remark}
 If the coefficients $p$, $q$ and $r$ in Hypothesis~\ref{hypo_dwig} are restricted to the half line $(0,\infty)$ and $T_+$ denotes the 
 self-adjoint realization of the (restricted) differential expression $\tau$ in $L^2((0,\infty))$ with Dirichlet boundary conditions at the regular endpoint $0$, 
 then the lower bound estimates above remain valid for $T_+$. In fact, the Dirichlet boundary condition at $0$ ensures that the boundary term in the integration by parts formula for $f\in D(T_+)$ vanishes and hence the proofs in the next section (see, e.g. \eqref{rotbarschfilet}) extend directly to the half line case.
 \end{remark}

 \section{Proofs}

 In this section we prove our main results. It will always be assumed that the coefficients $p,q,r$ satisfy Hypothesis~\ref{hypo_dwig}.
The first three items of the following useful statement can be found, for instance, in \cite[Lemma~A.2]{BST19}. The 
last item follows from \cite[Lemma A.1]{BST19}
	and the first item.

\begin{lemma}
	\label{krolik}
Assume Hypothesis \ref{h3.1}, then the following assertions hold for all $f$,~$g\in D(T)$:
	\begin{itemize}
		\item[(i)] $f$,~$\sqrt pf'\in L^2(\mathbb R)$ and $qf^2\in L^1(\mathbb R)$;
		\item[(ii)]there exists a sequence $(x_n)_{n\in\mathbb Z}$ in $\mathbb R$ satisfying $\lim_{n\rightarrow \infty}x_n=\infty$ and $\lim_{n\rightarrow -\infty}x_n=-\infty$ such that $\lim_{\lvert n \rvert\rightarrow \infty}f(x_n)= 0$;
		\item[(iii)] $\lim_{\lvert x \rvert\rightarrow \infty} (pf')(x)\overline{g(x)}=0$.
		\item [(iv)] $f \in L^\infty(\mathbb R)$.
	\end{itemize}
\end{lemma}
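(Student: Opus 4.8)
The plan is to derive the four assertions in the order (i), (ii), (iii), (iv), taking the first three essentially from the cited Sturm--Liouville theory and reserving the actual work for (iv), which is the genuinely new ingredient.

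For (i) I would first pass from the weighted to the unweighted $L^2$-space: by Hypothesis~\ref{h3.1}\,(c) there is a constant $c_0=\operatorname{ess\,inf}_{t\in\mathbb R\setminus[a,b]}r(t)>0$, so the pointwise bound $\lvert f\rvert^2\le c_0^{-1}\lvert f\rvert^2 r$ off $[a,b]$ gives $f\in L^2(\mathbb R\setminus[a,b])$, while continuity of $f\in AC_{\mathrm{loc}}(\mathbb R)$ on the compact interval $[a,b]$ yields $f\in L^2([a,b])$; together these give $f\in L^2(\mathbb R)$. The remaining two claims $\sqrt p\,f'\in L^2(\mathbb R)$ and $qf^2\in L^1(\mathbb R)$ I would obtain from the Green-type identity for $\langle Tf,f\rangle_{L^2_r}$: integration by parts turns $\int_{\mathbb R}(-(pf')'+qf)\overline f\,dx$ into $\int_{\mathbb R}\bigl(p\lvert f'\rvert^2+q\lvert f\rvert^2\bigr)\,dx$ up to boundary terms, exactly as in \cite[Lemma~A.2]{BST19}.

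For (ii) I would argue by contradiction: since $f\in L^2(\mathbb R)$ is continuous, one must have $\liminf_{x\to+\infty}\lvert f(x)\rvert=0$, for otherwise $\lvert f\rvert$ would be bounded below by some $\delta>0$ on a half-line and fail to be square-integrable; extracting $x_n\to+\infty$ with $f(x_n)\to0$, and symmetrically at $-\infty$, produces the desired sequence. Assertion (iii) is the standard vanishing of the boundary term in the limit-point case, and I would simply invoke \cite[Lemma~A.2]{BST19}.

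The main step is (iv), and I would prove it by a unit-interval estimate that converts the $L^2$-control of $\sqrt p\,f'$ from (i) into a uniform pointwise bound. On each interval $[n,n+1]$ the mean-value property of the integral furnishes a point $z_n$ with $\lvert f(z_n)\rvert\le\lVert f\rVert_2$; for $x\in[n,n+1]$ one then writes $f(x)=f(z_n)+\int_{z_n}^x f'(t)\,dt$ and splits $f'=p^{-1/2}\bigl(\sqrt p\,f'\bigr)$, so that Cauchy--Schwarz gives $\int_n^{n+1}\lvert f'\rvert\,dt\le\bigl(\int_n^{n+1}1/p\bigr)^{1/2}\lVert\sqrt p\,f'\rVert_2$. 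The key point, and the only obstacle when $1/p$ is unbounded, is that $\int_n^{n+1}1/p\le\lVert 1/p\rVert_\eta$ for every $\eta\in[1,\infty]$ by H\"older's inequality on the length-one interval, uniformly in $n\in\mathbb Z$. Combining these bounds yields $\lvert f(x)\rvert\le\lVert f\rVert_2+\lVert 1/p\rVert_\eta^{1/2}\lVert\sqrt p\,f'\rVert_2$ for all $x\in\mathbb R$, hence $f\in L^\infty(\mathbb R)$; this is precisely the content of \cite[Lemma~A.1]{BST19} combined with (i).
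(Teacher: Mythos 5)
Your proposal is correct and follows essentially the same route as the paper, which simply cites \cite[Lemma~A.2]{BST19} for items (i)--(iii) and deduces (iv) from (i) together with \cite[Lemma~A.1]{BST19}; your unit-interval bound $\lvert f(x)\rvert\leq \lVert f\rVert_2+\lVert 1/p\rVert_\eta^{1/2}\lVert\sqrt p\,f'\rVert_2$ is precisely the content of that cited lemma. The only step you gloss over is the \emph{separate} finiteness of $\lVert\sqrt p\,f'\rVert_2$ and $\lVert qf^2\rVert_1$ in (i), which needs the relative form bound supplied by $q\in L^1_{\mathrm u}(\mathbb R)$ (as in \eqref{pullover}) rather than Green's identity alone, but you defer this to the same reference the paper itself relies on.
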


For our estimates of the lower bound of $T$ it is convenient to reduce the considerations to 
the set
\begin{equation}
	\label{dminus}
	D_-(T) := \{f\in D(T) \,|\, (T f, f)_r\leq 0\},
\end{equation}
where $(\cdot ,\cdot)_r$ stands for the weighted inner product 
corresponding to $L_r^2(\mathbb R)$.
The potential $q$ is decomposed in its positive part $q_+$ and negative part $q_-$, i.\,e.
\begin{equation}
	q=q_+ - q_-,\quad\text{where}\quad q_+ := \frac{\lvert q \rvert + q}{2}\quad\text{and}\quad q_- := \frac{\lvert q \rvert - q}{2}.
\end{equation}

\begin{lemma}
	\label{A.6}
Assuming Hypothesis \ref{h3.1}, every function $f\in D_-(T)$ satisfies
	\begin{equation}
		\lVert \sqrt{p}f' \rVert_2^2 \leq \lVert q_-f^2 \rVert_1\quad\text{and}\quad\lVert qf^2 \rVert_1 \leq 2 \lVert q_-f^2 \rVert_1.
	\end{equation}
	Moreover, the inequality $\lVert q_- f^2 \lVert_1 \leq \lVert q_+ f^2 \Vert_1$ implies $\lVert \sqrt{p}f' \rVert_2 =0$.
\end{lemma}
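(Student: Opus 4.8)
The plan is to write the quadratic form $(Tf,f)_r$ explicitly and then exploit the defining inequality $(Tf,f)_r\le 0$ for $f\in D_-(T)$. First I would unwind the weighted inner product: since $(u,v)_r=\int_{\mathbb R} u\overline v\, r\,\mathrm dx$ and $Tf = r^{-1}\bigl(-(pf')'+qf\bigr)$, the weight cancels and $(Tf,f)_r=\int_{\mathbb R}\bigl(-(pf')'+qf\bigr)\overline f\,\mathrm dx$. The next step is an integration by parts on the first term. Working on a compact interval $[A,B]$ and letting $A\to-\infty$, $B\to+\infty$, the boundary contribution $[-(pf')\overline f\,]_A^B$ vanishes by Lemma~\ref{krolik}(iii) (applied with $g=f$), while the remaining integrals converge because $\sqrt p\,f'\in L^2(\mathbb R)$ and $qf^2\in L^1(\mathbb R)$ by Lemma~\ref{krolik}(i). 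This yields the identity
\begin{equation}
	(Tf,f)_r = \lVert \sqrt p\,f' \rVert_2^2 + \int_{\mathbb R} q\lvert f\rvert^2\,\mathrm dx.
\end{equation}

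With this identity in hand, the rest is elementary bookkeeping with nonnegative quantities. Writing $q=q_+-q_-$ and recording $\int_{\mathbb R} q\lvert f\rvert^2\,\mathrm dx = \lVert q_+ f^2\rVert_1 - \lVert q_- f^2\rVert_1$, the hypothesis $(Tf,f)_r\le 0$ becomes
\begin{equation}\label{rotbarschfilet}
	\lVert \sqrt p\,f' \rVert_2^2 + \lVert q_+ f^2\rVert_1 \le \lVert q_- f^2\rVert_1.
\end{equation}
Dropping the nonnegative term $\lVert q_+ f^2\rVert_1$ gives the first claimed inequality $\lVert \sqrt p\,f' \rVert_2^2 \le \lVert q_- f^2\rVert_1$. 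For the second inequality I would use $\lvert q\rvert = q_+ + q_-$, so that $\lVert q f^2\rVert_1 = \lVert q_+ f^2\rVert_1 + \lVert q_- f^2\rVert_1$; since \eqref{rotbarschfilet} forces $\lVert q_+ f^2\rVert_1 \le \lVert q_- f^2\rVert_1$, I conclude $\lVert q f^2\rVert_1 \le 2\lVert q_- f^2\rVert_1$.

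For the final assertion, assume $\lVert q_- f^2\rVert_1 \le \lVert q_+ f^2\rVert_1$. Substituting this into \eqref{rotbarschfilet} gives $\lVert \sqrt p\,f' \rVert_2^2 + \lVert q_+ f^2\rVert_1 \le \lVert q_+ f^2\rVert_1$, hence $\lVert \sqrt p\,f' \rVert_2^2 \le 0$ and therefore $\lVert \sqrt p\,f' \rVert_2 = 0$.

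The only genuinely delicate point is the integration by parts, specifically the justification that the boundary terms vanish and that the one-sided limits of the integrals exist as $[A,B]$ exhausts $\mathbb R$. This is exactly where the structural facts collected in Lemma~\ref{krolik} are indispensable: part~(iii) annihilates the boundary term and part~(i) supplies the $L^2$- and $L^1$-integrability guaranteeing convergence. Everything downstream of the identity is a two-line manipulation of nonnegative norms.
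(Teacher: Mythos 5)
Your proposal is correct and follows essentially the same route as the paper: integration by parts justified by Lemma~\ref{krolik}~(i) and (iii) to obtain the identity $(Tf,f)_r = \lVert \sqrt{p}f' \rVert_2^2 + \lVert q_+f^2 \rVert_1 - \lVert q_- f^2 \rVert_1$, followed by the same elementary manipulations of the nonnegative terms. The only difference is that you spell out the exhaustion argument for the boundary terms in slightly more detail than the paper, which simply cites the lemma.
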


\begin{proof}
	For $f\in D_-(T)$ integration by parts together with Lemma~\ref{krolik}~(i) and (iii) yields
	\begin{equation}
		\label{rotbarschfilet}
		\begin{split}
		0 \geq (Tf,f)_r & = \int_{\mathbb R} p(t)\lvert f'(t) \rvert^2\,\mathrm d t + \int_{\mathbb R} q(t)\lvert f(t) \rvert^2\,\mathrm d t\\
		& = \lVert \sqrt{p}f' \rVert_2^2 + \lVert q_+f^2 \rVert_1 - \lVert q_- f^2 \rVert_1.
	\end{split}
	\end{equation}
	This implies $\lVert \sqrt{p}f' \rVert_2^2 \leq \lVert q_-f^2 \rVert_1$ and $\lVert q_+f^2 \rVert_1\leq \lVert q_-f^2 \rVert_1$. Therefore, with $\lvert q \rvert=q_++q_-$ we have
	\begin{equation*}
		\lVert qf^2 \rVert_1 = \lVert q_+f^2 \rVert_1 + \lVert q_-f^2 \rVert_1 \leq 2\lVert q_-f^2 \rVert_1.%\qedhere
	\end{equation*}
	If $\lVert q_- f^2 \rVert_1 \leq \lVert q_+f^2 \rVert_1$ holds, then \eqref{rotbarschfilet} implies $\lVert \sqrt{p}f' \rVert_2 =0$.
\end{proof}

\begin{lemma}
	\label{elfo}
In addition to Hypothesis \ref{h3.1}, assume that there are constants $\alpha\geq 0$, $\beta\geq 0$ and a nonnegative function $g\in L^\infty(\mathbb R)$ such that
	\begin{enumerate}
	    \item $\lVert q_- f^2 \rVert_{1}\leq \alpha \Vert f \rVert_{2}^2$ and $\Vert f \rVert_\infty^2\leq \beta \Vert f \rVert_{2}^2$ 
		       for all $f\in D_-(T)$;
		\item $\mu(\Omega_g)\beta<1$. 
	\end{enumerate}
	Then one has
	\begin{equation}
		\label{infspecTmax}
		\min\sigma(T) \geq \frac{-\alpha\lVert g \rVert_\infty}{1-\mu(\Omega_g)\beta}.
	\end{equation}
\end{lemma}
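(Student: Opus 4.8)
The plan is to use the variational characterization
\[
\min\sigma(T) = \inf_{0\neq f\in D(T)} \frac{(Tf,f)_r}{(f,f)_r},
\]
which is available since $T$ is self-adjoint and bounded from below. Accordingly, it suffices to verify the pointwise inequality $(Tf,f)_r \geq c\,(f,f)_r$ with $c := -\alpha\lVert g\rVert_\infty/(1-\mu(\Omega_g)\beta)$ for every $f\in D(T)$. For $f$ with $(Tf,f)_r > 0$ there is nothing to prove, because $\alpha\geq 0$ and $1-\mu(\Omega_g)\beta>0$ force $c\leq 0$; hence only $f\in D_-(T)$ requires attention.

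For such $f$ I would start from the form identity \eqref{rotbarschfilet}, discard the nonnegative terms $\lVert\sqrt p f'\rVert_2^2$ and $\lVert q_+ f^2\rVert_1$, and then apply the first bound in assumption~(1), reaching a lower bound in the \emph{unweighted} $L^2$-norm:
\[
(Tf,f)_r \geq -\lVert q_- f^2\rVert_1 \geq -\alpha\lVert f\rVert_2^2.
\]
It then remains to compare $\lVert f\rVert_2^2$ with the weighted quantity $(f,f)_r$; this is the heart of the matter, since $1/r$ need not be bounded and the naive estimate $\lVert f\rVert_2^2\leq\lVert 1/r\rVert_\infty (f,f)_r$ is unavailable.

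The key comparison step is
\[
\lVert f\rVert_2^2 \leq \frac{\lVert g\rVert_\infty}{1-\mu(\Omega_g)\beta}\,(f,f)_r,
\]
which I would prove by splitting $\mathbb R$ into $\Omega_g$ and its complement. On $\mathbb R\setminus\Omega_g$ one has $r(x)g(x)\geq 1$, whence $\lvert f(x)\rvert^2\leq\lVert g\rVert_\infty\,r(x)\lvert f(x)\rvert^2$ pointwise and therefore $\int_{\mathbb R\setminus\Omega_g}\lvert f\rvert^2\,\mathrm dx\leq\lVert g\rVert_\infty (f,f)_r$. On the exceptional set $\Omega_g$, where $r$ may be arbitrarily small, I would instead invoke the second bound in assumption~(1) to estimate $\int_{\Omega_g}\lvert f\rvert^2\,\mathrm dx\leq\mu(\Omega_g)\lVert f\rVert_\infty^2\leq\mu(\Omega_g)\beta\lVert f\rVert_2^2$. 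Summing the two contributions gives $\lVert f\rVert_2^2\leq\lVert g\rVert_\infty (f,f)_r+\mu(\Omega_g)\beta\lVert f\rVert_2^2$, and assumption~(2) permits absorbing the last term into the left-hand side.

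Chaining the two displays yields $(Tf,f)_r\geq c\,(f,f)_r$ for all $f\in D_-(T)$, which completes the argument. The principal obstacle is the comparison step: the favorable pointwise inequality $1\leq r g$ holds only off $\Omega_g$, so the contribution of $\Omega_g$ cannot be dominated by the weighted norm and must instead be absorbed through the uniform sup-norm control together with the smallness condition $\mu(\Omega_g)\beta<1$. In effect the two hypotheses in~(1) play complementary roles: the bound on $\lVert q_- f^2\rVert_1$ controls the quadratic form, while the bound on $\lVert f\rVert_\infty$ tames the weight on the bad set $\Omega_g$.
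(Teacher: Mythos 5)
Your proposal is correct and follows essentially the same route as the paper: the form identity \eqref{rotbarschfilet} gives $(Tf,f)_r\geq-\alpha\lVert f\rVert_2^2$ on $D_-(T)$, and your split of $\lVert f\rVert_2^2$ over $\Omega_g$ and its complement (using $rg\geq 1$ off $\Omega_g$ and the sup-norm bound on $\Omega_g$, then absorbing via $\mu(\Omega_g)\beta<1$) is exactly the paper's chain \eqref{kaugummi}, merely written as an upper bound on $\lVert f\rVert_2^2$ instead of a lower bound on $\lVert g\rVert_\infty(f,f)_r$. The final passage to $\min\sigma(T)$ via the variational characterization matches the paper's observation that the inequality extends trivially to all of $D(T)$.
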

\begin{proof}
	Let $f\in D_-(T)$. 
	 Then one has 
\begin{equation}\label{kaugummi}
\begin{split}
\lVert g \rVert_\infty (f,f)_r&=\lVert g \rVert_\infty\int_{\mathbb R} \lvert f(t) \rvert^2 r(t)\,\mathrm d t\geq\int_{\mathbb R} \lvert f(t) \rvert^2 r(t)g(t)\,\mathrm d t     \\[0.5\baselineskip]
&\geq \int_{\mathbb R\setminus\Omega_g} \lvert f(t) \rvert^2 r(t)g(t)\,\mathrm d t \geq \Vert f  
\rVert_2^2 - \int_{\Omega_g} \lvert f(t) \rvert^2\,\mathrm d t      \\[0.5\baselineskip]
&\geq \Vert f \rVert_2^2 - \mu(\Omega_g) \Vert f \rVert_\infty^2 \geq \bigl(1 - \mu(\Omega_g) 
\beta\bigr) \Vert f \rVert_2^2.  
\end{split}
\end{equation}
Further, we have by \eqref{rotbarschfilet}
\begin{equation*}
		(T f,f)_r =  \lVert \sqrt{p}f' \rVert_2^2 + \lVert q_+f^2 \rVert_1 - \lVert q_- f^2 \rVert_1 \geq - \lVert q_-f^2 \rVert_1 \geq -\alpha \Vert f \rVert_2^2.
	\end{equation*}
	This together with \eqref{kaugummi} yields
	\begin{equation}
		\label{lausitz}
		(T f,f)_r  \geq -\frac{\alpha \lVert g \rVert_\infty}{1 - \mu(\Omega_g)\beta}(f,f)_r.
	\end{equation}
	Obviously, the inequality in \eqref{lausitz} holds also for $f\in D(T)\setminus D_-(T)$ and, thus, for all $f\in D(T)$. This implies \eqref{infspecTmax}
\end{proof}

Next we recall estimates on the $L^\infty$-norm of functions in $D(T)$ from \cite{BST19}. 

\begin{lemma}
	\label{Chewbacca} 
Assume  Hypothesis \ref{h3.1}. Then the following assertions hold for all $f\in D(T)$.
	\begin{enumerate}
		\item If $1/p\in L^\eta(\mathbb R)$, where $\eta\in [1,\infty)$, then
		      \begin{equation}
			      \label{Bischleben}
			      \Vert f \rVert_{\infty}  \leq \left(\frac{2\eta-1}{\eta} \sqrt{\lVert 1/p \rVert_{\eta}} \lVert \sqrt p f' \rVert_2\right)^{\frac{\eta}{2\eta-1}} \Vert f \rVert_2^{\frac{\eta-1}{2\eta-1}}.
		      \end{equation}
		\item If $1/p\in L^\infty(\mathbb R)$ then
		      \begin{equation}
			      \label{50Watt}
			      \Vert f \rVert_\infty \leq \left(2\sqrt{\lVert 1/p \rVert_\infty}\lVert \sqrt{p}f' \rVert_2 \Vert f \rVert_2\right)^{1/2}.
		      \end{equation}
		      Moreover, for every $\varepsilon>0$ and all $n\in\mathbb Z$ one has
		      \begin{equation}
			      \label{pullover}
			      \sup_{t\in [n,n+1]} \lvert f(t) \rvert^2 \leq \varepsilon \lVert 1/p \rVert_\infty \int_n^{n+1} p(t)\lvert f'(t) \rvert^2\,\mathrm d t + \Bigl(1+\frac{1}{\varepsilon}\Bigr) \int_n^{n+1} \lvert f(t) \rvert^2\,\mathrm d t.
		      \end{equation}
	\end{enumerate}
\end{lemma}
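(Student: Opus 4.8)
The plan is to derive all three estimates from the fundamental theorem of calculus applied to $|f|^m$ for a suitable power $m\ge 1$, combined with H\"older's inequality. Since $f\in D(T)$ is locally absolutely continuous (hence continuous) and, by Lemma~\ref{krolik}(i), satisfies $f,\sqrt p f'\in L^2(\mathbb R)$, while Lemma~\ref{krolik}(ii) provides a sequence $(x_n)$ with $x_n\to\pm\infty$ and $f(x_n)\to 0$, one can represent, for every $x\in\mathbb R$ and $m\ge 1$,
$$
|f(x)|^m=\int_{-\infty}^x \frac{\mathrm d}{\mathrm dt}|f(t)|^m\,\mathrm dt,\qquad \Big|\frac{\mathrm d}{\mathrm dt}|f|^m\Big|\le m|f|^{m-1}|f'|\ \text{ a.e.,}
$$
the boundary contribution at $-\infty$ being eliminated along $(x_n)$ and the integral being absolutely convergent by the estimates below. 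It therefore suffices to bound $\int_{\mathbb R}|f|^{m-1}|f'|\,\mathrm dt$ and then take the supremum over $x$.

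For the global bound \eqref{Bischleben} I would factor $|f'|=p^{-1/2}(\sqrt p|f'|)$ and apply H\"older's inequality to the three factors $|f|^{m-1}$, $p^{-1/2}$, $\sqrt p|f'|$ with exponents $\tfrac{2\eta}{\eta-1}$, $2\eta$, $2$, which are conjugate since their reciprocals sum to $1$. Choosing $m=(2\eta-1)/\eta$ makes $(m-1)\cdot\tfrac{2\eta}{\eta-1}=2$, so the first factor contributes $\|f\|_2^{(\eta-1)/\eta}$, the middle factor contributes $\|p^{-1/2}\|_{2\eta}=\|1/p\|_\eta^{1/2}$, and the last contributes $\|\sqrt p f'\|_2$. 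This yields
$$
\|f\|_\infty^{(2\eta-1)/\eta}\le \frac{2\eta-1}{\eta}\,\|1/p\|_\eta^{1/2}\,\|\sqrt p f'\|_2\,\|f\|_2^{(\eta-1)/\eta},
$$
and raising to the power $\eta/(2\eta-1)$ gives \eqref{Bischleben}; for $\eta=1$ the middle exponent degenerates and the step reduces to the Cauchy--Schwarz inequality with $m=1$. Taking instead $m=2$ and, when $1/p\in L^\infty(\mathbb R)$, bounding $p^{-1/2}\le\|1/p\|_\infty^{1/2}$ pointwise before a single Cauchy--Schwarz step yields $|f(x)|^2\le 2\|1/p\|_\infty^{1/2}\|f\|_2\|\sqrt p f'\|_2$, which is \eqref{50Watt}.

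For the local estimate \eqref{pullover} I would avoid the behaviour at infinity entirely and work on $[n,n+1]$ directly. Since this interval has length one, for $t\in[n,n+1]$ I average the identity $|f(t)|^2=|f(s)|^2+\int_s^t \tfrac{\mathrm d}{\mathrm du}|f(u)|^2\,\mathrm du$ over $s\in[n,n+1]$ to obtain $|f(t)|^2\le\int_n^{n+1}|f|^2\,\mathrm ds+2\int_n^{n+1}|f||f'|\,\mathrm du$. Bounding $p^{-1/2}\le\|1/p\|_\infty^{1/2}$ and applying Cauchy--Schwarz on $[n,n+1]$ followed by Young's inequality $2AB\le\varepsilon^{-1}A^2+\varepsilon B^2$ with $A=(\int_n^{n+1}|f|^2)^{1/2}$ and $B=\|1/p\|_\infty^{1/2}(\int_n^{n+1}p|f'|^2)^{1/2}$ produces the two terms with exactly the stated constants; taking the supremum over $t\in[n,n+1]$ gives \eqref{pullover}.

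The only genuinely delicate point is the justification of the representation of $|f(x)|^m$ in the first paragraph: one must ensure that $|f|^m$ is locally absolutely continuous with $\tfrac{\mathrm d}{\mathrm dt}|f|^m$ controlled by $m|f|^{m-1}|f'|$ (valid for $m\ge 1$ and complex-valued $f$ via $|f|^m=(f\overline f)^{m/2}$), that the limit along $(x_n)$ exists, and that $\int_{\mathbb R}|f|^{m-1}|f'|\,\mathrm dt<\infty$ so that the improper integral converges absolutely. All of this is supplied by Lemma~\ref{krolik}(i),(ii) together with the H\"older bounds above; once this is in place the three inequalities are routine.
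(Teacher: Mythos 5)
Your proposal is correct. For the local estimate \eqref{pullover} your argument is essentially the paper's: the paper picks a point $y\in[n,n+1]$ with $\lvert f(y)\rvert^2=\int_n^{n+1}\lvert f\rvert^2\,\mathrm dt$ via the mean value theorem for integrals and then applies Cauchy--Schwarz and $2AB\le \varepsilon^{-1}A^2+\varepsilon B^2$, which is the same computation as your averaging over $s\in[n,n+1]$ and yields identical constants. For \eqref{Bischleben} and \eqref{50Watt} the paper gives no proof at all, deferring to \cite[Lemma 4.1]{BST19}; your self-contained derivation via the representation $\lvert f(x)\rvert^m=\int_{-\infty}^x\frac{\mathrm d}{\mathrm dt}\lvert f\rvert^m\,\mathrm dt$ (justified by Lemma~\ref{krolik}(i),(ii)) together with the three-factor H\"older inequality with exponents $\tfrac{2\eta}{\eta-1},2\eta,2$ and the choice $m=(2\eta-1)/\eta$ checks out: the exponents are conjugate, the first factor gives $\Vert f\rVert_2^{(\eta-1)/\eta}$, the factorization $\lvert f'\rvert=p^{-1/2}(\sqrt p\,\lvert f'\rvert)$ produces $\Vert 1/p\rVert_\eta^{1/2}\Vert\sqrt p f'\rVert_2$, and raising to the power $\eta/(2\eta-1)$ reproduces \eqref{Bischleben} exactly, with the $\eta=1$ and $\eta=\infty$ (i.e.\ $m=2$) cases degenerating correctly to Cauchy--Schwarz. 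You are also right to flag the absolute continuity of $\lvert f\rvert^m$ and the vanishing of the boundary term along the sequence $(x_n)$ as the only delicate points, and both are adequately covered by Lemma~\ref{krolik}. In short, you prove more than the paper does on the page, and what overlaps coincides.
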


\begin{proof}
The estimates \eqref{Bischleben} and \eqref{50Watt} are proved in \cite[Lemma 4.1]{BST19}. 
For the convenience of the reader we verify
the estimate \eqref{pullover}, which is a variant of \cite[Lemma~9.32]{Te14}.
 Let $\varepsilon>0$ and $n\in\mathbb Z$. Then for $f\in D(T)$ and $x$,~$y\in[n,n+1]$
	\begin{equation*}
		\lvert f(x) \rvert^2 = \lvert f(y) \rvert^2 + 2\,\text{\rm Re}\int_y^x f'(t)\overline{f(t)}\,\mathrm d t.
	\end{equation*}
	By the mean value theorem we can choose $y$ such that $\lvert f(y) \rvert^2 = \int_n^{n+1} \lvert f(t) \rvert^2\,\mathrm d t$. Thus, by the Cauchy--Schwarz inequality and $2\alpha\beta\leq \alpha^2 + \beta^2$ for $\alpha$,~$\beta\in\mathbb R$ we obtain
	\begin{equation*}
		\begin{aligned}
			\lvert f(x) \rvert^2 & \leq \int_n^{n+1} \lvert f(t) \rvert^2\,\mathrm d t 
			\\[0.5\baselineskip]
			& \quad + 2\left(\frac{1}{\varepsilon}\int_{n}^{n+1}\lvert f(t) \rvert^2\,\mathrm d t\right)^{1/2} \cdot \left(\lVert 1/p \rVert_\infty \varepsilon\int_{n}^{n+1}p(t)\lvert f'(t) \rvert^2\,\mathrm d t \right)^{1/2} \\[0.5\baselineskip]
			             & \leq \varepsilon \lVert 1/p \rVert_\infty \int_n^{n+1} p(t)\lvert f'(t) \rvert^2\,\mathrm d t + \Bigl(1+\frac{1}{\varepsilon}\Bigr) \int_n^{n+1} \lvert f(t) \rvert^2\,\mathrm d t,
		\end{aligned}
	\end{equation*}
	which leads to \eqref{pullover}.
\end{proof}

For the proofs of Theorems~\ref{thm1} -- \ref{thm3}
it is no restriction to consider $f\in D_-(T)\setminus \{0\}$ and to assume 
that $q_-$ is positive on a set of positive Lebesgue measure.

\begin{proof}[Proof of Theorem~\ref{thm1}] 
  Let $1/p\in L^\infty(\mathbb R)$ and consider $\alpha$, $\beta$ as in \eqref{erdnuss1}. Choose $\varepsilon=(2 \lVert q_- \Vert_{\mathrm u}\lVert 1/p \rVert_\infty)^{-1}>0$. The estimate in \eqref{pullover} of Lemma~\ref{Chewbacca} yields
	\begin{equation}
		\label{gelb}
		\begin{split}
			\lVert q_-f^2 \rVert_1&=\int_{\mathbb R} q_-(t) \lvert f(t) \rvert^2\,\mathrm d t \\[0.5\baselineskip]
			&\leq \lVert q_- \Vert_{\mathrm u}\sum_{n\in\mathbb Z} \sup_{t\in [n,n+1]}\lvert f(t) \rvert^2\\[0.5\baselineskip]
			&\leq \lVert q_- \Vert_{\mathrm u} \left(\varepsilon \lVert 1/p \rVert_\infty\lVert \sqrt p f' \rVert_{2}^2 + \left(1+\frac{1}{\varepsilon}\right)\Vert f \rVert_{2}^2\right)\\[0.5\baselineskip]
			&=\frac{1}{2}\lVert \sqrt{p} f' \rVert_{2}^2  + \bigl(\lVert q_- \Vert_{\mathrm u}+ 2\lVert 1/p \rVert_\infty\lVert q_- \Vert_{\mathrm u}^2\bigr)\Vert f \rVert_{2}^2\\[0.5\baselineskip]
			&=\frac{1}{2}\lVert \sqrt{p} f' \rVert_{2}^2  + \frac{\alpha}{2}\Vert f \rVert_{2}^2.
		\end{split}
	\end{equation}
	Together with Lemma~\ref{A.6} we obtain
	\begin{equation*}
		\lVert \sqrt{p} f' \rVert_{2}^2 = 2\lVert \sqrt{p} f' \rVert_2^2 - \lVert \sqrt{p} f' \rVert_2^2 \leq 2 \lVert q_-f^2 \rVert_1 -\lVert \sqrt{p} f' \rVert_2^2 \leq \alpha  \Vert f \rVert_2^2.
	\end{equation*}
	With \eqref{50Watt} in Lemma~\ref{Chewbacca}	and \eqref{gelb} we see
	\begin{equation*}
		\Vert f \rVert_\infty^2 \leq 2\sqrt{\lVert 1/p \rVert_\infty \alpha }\Vert f \rVert_2^2 = \beta \Vert f \rVert_2^2 \quad\text{and}\quad  \lVert q_-f^2 \rVert_1 \leq \alpha \Vert f \rVert_2^2
	\end{equation*}
	and hence Lemma~\ref{elfo} leads to the statements in Theorem~\ref{thm1}.
\end{proof}

\begin{proof}[Proof of Theorem~\ref{thm3}]
Suppose that $1/p\in L^\eta(\mathbb R)$ and $q_-\in L^s(\mathbb R)$, where $\eta$,~$s\in[1,\infty)$ with $\eta+s>2$. Since $\eta+s>2$ we obtain
	\begin{equation*}
		2\eta s - \eta -s = \eta(s-1) + s(\eta-1)\geq s-1 + \eta-1>0.
	\end{equation*}
	Let $\alpha$ and $\beta$ as in \eqref{erdnuss10} and \eqref{olala}, respectively. From Hölder's inequality we obtain
	\begin{equation}
		\label{qMinusEstS}
		\begin{split}
			\lVert q_-f^2 \rVert_1 & \leq \Vert f \rVert_\infty ^{\frac{2}{s}} \int_{\mathbb R} \lvert q_-(t) \rvert \lvert f(t) \rvert^{\frac{2(s-1)}{s}}\,\mathrm d t 
			\\[0.5\baselineskip] 
			& \leq \Vert f \rVert_\infty ^{\frac{2}{s}} \left(\int_{\mathbb R}\lvert q_-(t) \rvert^{s}\,\mathrm d t \right)^{\frac{1}{s}}\left(\int_{\mathbb R} \lvert f(t) \rvert^2
			\,\mathrm d t\right)^{\frac{s-1}{s}}                                                                                                                                                                                                                   \\[0.5\baselineskip]
			& = \lVert q_- \Vert_{s} \Vert f \rVert_{\infty}^{\frac{2}{s}} \Vert f \rVert_{2}^{\frac{2(s-1)}{s}}.
		\end{split}
	\end{equation}
	Thus, together with Lemma~\ref{Chewbacca}~(i) and Lemma~\ref{A.6} we obtain
	\begin{equation*}
		\begin{aligned}
			\Vert f \rVert_\infty^2 & = \left(\frac{\Vert f \rVert_\infty^{\frac{2(2\eta -1)}{\eta}}}{\Vert f \rVert_\infty^{\frac{2}{s}}}\right)^{\frac{\eta s}{2\eta s-\eta -s}}
			\leq\left(\frac{\left(\frac{2\eta-1}{\eta}\right)^2 \lVert 1/p \rVert_\eta \lVert \sqrt p f' \rVert_2^2 \Vert f \rVert_2^\frac{2(\eta-1)}{\eta}}{\Vert f \rVert_\infty^{\frac{2}{s}}}\right)^{\frac{\eta s}{2\eta s-\eta -s}} \\[0.5\baselineskip]
			                    & \leq \left(\left(\frac{2\eta-1}{\eta}\right)^2 \lVert 1/p \rVert_\eta \lVert q_- \Vert_s\right)^{\frac{\eta s}{2\eta s-\eta -s}} \Vert f \rVert_2^2 = \beta \Vert f \rVert_2^2.
		\end{aligned}
	\end{equation*}
	The estimate from \eqref{qMinusEstS} yields
	\begin{equation*}
		\lVert q_-f^2 \rVert_{1} \leq \lVert q_- \Vert_{s} \beta^{\frac{1}{s}} \Vert f \rVert_2^2 = \alpha \Vert f \rVert_2^2.
	\end{equation*}

	Now consider the case $q_-\in L^\infty(\mathbb R)$. Choose $\alpha$ and $\beta$ as in \eqref{erdnuss10} and \eqref{olala}, respectively. Observe that
\begin{equation}
	\label{meer}
	\lVert q_-f^2 \rVert_1\leq \lVert q_- \Vert_\infty \Vert f \rVert_2^2 = \alpha \Vert f \rVert_2^2.
\end{equation}
Lemma~\ref{Chewbacca}~(i) in combination with Lemma~\ref{A.6} and \eqref{meer} leads to
\begin{equation*}
	\begin{split}
		\Vert f \rVert_\infty^2 & \leq \left(\left(\frac{2\eta-1}{\eta}\right)^{2} \lVert 1/p \rVert_\eta \lVert \sqrt p f' \rVert_2^2\right)^{\frac{\eta}{2\eta-1}} \Vert f \rVert_2^{\frac{2(\eta-1)}{2\eta-1}} \\[0.5\baselineskip]
						  & \leq \left(\left(\frac{2\eta-1}{\eta}\right)^{2} \lVert 1/p \rVert_\eta \lVert q_- \Vert_\infty\right)^{\frac{\eta}{2\eta-1}} \Vert f \rVert_2^2 = \beta \Vert f \rVert_2^2.
	\end{split}
\end{equation*}
Now Theorem~\ref{thm3} follows from Lemma~\ref{elfo}.
\end{proof}

\begin{proof}[Proof of Theorem~\ref{thm2}]
Consider first the case $1/p\in L^\infty(\mathbb R)$ and $q_-\in L^s(\mathbb R)$, where $s\in[1,\infty)$. Let $\alpha$ and $\beta$ be as in \eqref{erdnuss10}. Again Hölder's inequality yields \eqref{qMinusEstS}. Lemma~\ref{Chewbacca}~(ii), \eqref{qMinusEstS} and Lemma~\ref{A.6} imply
	\begin{equation*}
		\begin{split}
			\Vert f \rVert_\infty^2 & = \left(\frac{\Vert f \rVert_\infty^4}{\Vert f \rVert_\infty^{\frac{2}{s}}}\right)^{\frac{s}{2s-1}}
			\leq \left(\frac{4\lVert 1/p \rVert_\infty \lVert \sqrt p f' \rVert_2^2 \Vert f \rVert_2^2}{\Vert f \rVert_\infty^{\frac{2}{s}}}\right)^{\frac{s}{2s-1}} \\[0.5\baselineskip]
			                  & \leq \left(4\lVert 1/p \rVert_\infty \lVert q_- \Vert_s\right)^{\frac{s}{2s-1}} \Vert f \rVert_2^2 = \beta \Vert f \rVert_2^2.
		\end{split}
	\end{equation*}
	By applying this to the estimate in \eqref{qMinusEstS} we arrive at
	\begin{equation*}
		\lVert q_- f^2 \rVert_{1} \leq \lVert q_- \Vert_{s} \beta^{\frac{1}{s}} \Vert f \rVert_2^2 = \alpha \Vert f \rVert_2^2,
	\end{equation*}
and again the statements in Theorem~\ref{thm2} follow from Lemma~\ref{elfo}.

The assertion for $1/p$,~$q_-\in L^\infty(\mathbb R)$ follows in a similar way. 
Consider $\alpha$, $\beta$ in \eqref{erdnuss10}. As before \eqref{meer} holds.
Lemma~\ref{Chewbacca}~(ii) in combination with Lemma~\ref{A.6} and \eqref{meer} implies
\begin{equation*}
	\Vert f \rVert_\infty^2\leq 2\sqrt{\lVert 1/p \rVert_\infty} \lVert \sqrt p f' \rVert_2 \Vert f \rVert_2 \leq 2\sqrt{\lVert 1/p \rVert_\infty \lVert q_- \Vert_\infty} \Vert f \rVert_2^2 = \beta \Vert f \rVert_2^2.\qedhere
\end{equation*}
\end{proof}

\begin{proof}[Proof of Proposition~\ref{propi}]
 Let $f\in D_-(T)$. In the case $1/p$,~$q_-\in L^1(\mathbb R)$ Lemma~\ref{A.6} and Lemma~\ref{Chewbacca}~(i) yield
	\begin{equation*}
		\Vert f \rVert_\infty^2 \leq \lVert 1/p \rVert_1 \lVert \sqrt p f' \rVert_2^2 \leq \lVert 1/p \rVert_1 \lVert q_-f^2 \rVert_1 \leq \lVert 1/p \rVert_1 \lVert q_- \Vert_1 \Vert f \rVert_\infty^2.
	\end{equation*}
	If $\lVert 1/p \rVert_1\lVert q_- \Vert_1<1$, then $\Vert f \rVert_\infty=0$ and hence $D_-(T)=\{0\}$. 
	This implies $(Tf,f)_r\geq 0$ for all $f\in D(T)$ and hence $\min\sigma(T)\geq 0$.
\end{proof}

\end{document}